\newtheorem{theorem}{Theorem}[section]
\newtheorem{proposition}[theorem]{Proposition}
\newtheorem{definition}[theorem]{Definition}
\newtheorem{corollary}[theorem]{Corollary}
\newtheorem{remark}[theorem]{Remark}
\newenvironment{proof}[1][Proof]{\textbf{#1.} }{\ \rule{0.5em}{0.5em}}
\newcommand{\be}{\begin{equation}}
\newcommand{\ee}{\end{equation}}
\newcommand{\bes}{\begin{equation*}}
\newcommand{\ees}{\end{equation*}}
\newcommand{\cM}{\mathcal{M}}
\newcommand{\cR}{\mathcal{R}}
\newcommand{\cS}{\mathcal{S}}
\newcommand{\Rp}{\mathbb{R}_+}
\newcommand{\Rpt}{\mathbb{R}_+^2}
\begin{document}

\title{What type of dynamics arise in E$_0$-dilations of commuting quantum Markov processes?}

\author{Orr Moshe Shalit\\
          Department of Mathematics, Technion\\
          Haifa 32000, Israel\\
          email address: orrms@tx.technion.ac.il}
\date{19.11.2007}
\maketitle

\begin{abstract}
{Let $H$ be a separable Hilbert space. Given two strongly commuting CP$_0$-semigroups $\phi$ and $\theta$ on $B(H)$, there
is a Hilbert space $K \supseteq H$ and two (strongly) commuting E$_0$-semigroups $\alpha$ and $\beta$ such that
$$\phi_s \circ \theta_t (P_H A P_H) = P_H \alpha_s \circ \beta_t (A) P_H$$
for all $s,t \geq 0$ and all $A \in B(K)$.

In this note we prove that if $\phi$ is not an automorphism semigroup then $\alpha$ is cocycle conjugate to the minimal $*$-endomorphic dilation of $\phi$, and that if $\phi$ is an automorphism semigroup then $\alpha$ is also an automorphism semigroup. In particular, we conclude that if $\phi$ is not an automorphism semigroup and has a bounded generator (in particular, if $H$ is finite dimensional) then $\alpha$ is a type I E$_0$-semigroup.

\bigskip

\noindent{\bf Keywords:} CP-semigroup, E$_0$-semigroup, type I E$_0$-semigroup, two-parameter semigroup, minimal dilation, cocycle conjugacy.}

\bigskip
\noindent{\bf MSC (2000)}: 46L55, 46L57.
\end{abstract}

\section{Introduction}
Let $H$ be a separable Hilbert space. A \emph{CP$_0$-semigroup} on $B(H)$ is a family $\phi = \{\phi_t\}_{t\geq0}$ of contractive, normal, unital and completely positive maps on $B(H)$ satisfying the semigroup property
$$\phi_{s+t}(A) = \phi_s (\phi_t(A)) \,\, ,\,\, s,t\geq 0, A\in B(H) ,$$
$$\phi_{0}(A) = A \,\, , \,\,  A\in B(H) ,$$
%for all $s,t\geq 0$ and all $a \in \cM$, satisfying the natural continuity condition
and the continuity condition
$$\lim_{t\rightarrow t_0} \langle \phi_t(A)h,g\rangle = \langle \phi_{t_0}(A)h,g\rangle \,\, , \,\, A\in B(H), h,g \in H .$$
A CP$_0$-semigroup is sometimes called a \emph{Quantum Markov Processes}, as it may be considered as noncommutative generalization of a Markov processes. A CP$_0$-semigroup is called an \emph{E$_0$-semigroup} if each of
its elements is a $*$-endomorphism.

The simplest E$_0$-semigroups are automorphism semigroups. The rest of the E$_0$-semigroups can be classified into 3 ``types": type I, type II and type III. There is a complete classification of type I E$_0$-semigroups, and it is known that if $\alpha$ is a type I E$_0$-semigroup then there is a $d \in \{1,2,\ldots, \infty\}$ such that $\alpha$ is cocycle conjugate to the CCR flow of index $d$. See \cite{Arv03} for the whole story.

Let $\phi$ be a CP$_0$-semigroup acting on $B(H)$, and let $\alpha$ be an
E$_0$-semigroup acting on $B(K)$, where $K\supseteq H$. We say that $\alpha$ is an \emph{E$_0$-dilation} of
$\phi$ if for all $t \geq 0$ and $A \in B(K)$
\be\label{eq:dilation}
\phi_t(P_H A P_H) = P_H \alpha_t (A) P_H ,
\ee
(here $P_H$ denotes the orthogonal projection of $K$ onto $H$). In the mid 1990's Bhat proved the following result, known today as ``Bhat's Theorem" (see \cite{Bhat1996}):
\begin{theorem}
\emph{{\bf (Bhat).}} Every CP$_0$-semigroup has a unique minimal
E$_0$-dilation.
\end{theorem}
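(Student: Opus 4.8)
The plan is to prove this in two stages --- existence of an E$_0$-dilation, then existence of a \emph{minimal} one together with its uniqueness up to the natural notion of conjugacy (a unitary $K\to K'$ fixing $H$ and intertwining the two semigroups) --- both resting on Stinespring's theorem and a Kolmogorov-type inductive limit. For existence I would start from the minimal Stinespring representations of the individual maps $\phi_t$: since each $\phi_t$ is unital, normal and completely positive, and since every normal representation of $B(H)$ is unitarily equivalent to an amplification of the identity representation, the minimal Stinespring dilation of $\phi_t$ may be written as an isometry $V_t\colon H\to H\otimes\cM_t$ with $\phi_t(a)=V_t^*(a\otimes 1_{\cM_t})V_t$ and $H\otimes\cM_t=\overline{\operatorname{span}}\{(a\otimes 1)V_th\}$, the $\cM_t$ being separable because $H$ is. The semigroup law $\phi_{s+t}=\phi_s\circ\phi_t$, together with uniqueness of minimal Stinespring dilations, forces canonical isometric identifications relating the space attached to an interval to those attached to its subintervals; iterating, each partition $0=u_0<\dots<u_n=t$ yields a Hilbert space built by composing the corresponding Stinespring data, and refining the partition gives an isometric embedding of the coarser space into the finer one. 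Taking the inductive limit of this directed system over all partitions of $[0,\infty)$ produces a Hilbert space $K\supseteq H$, and one defines $\alpha_t$ on $B(K)$ by the shift-type prescription dictated by the connecting maps --- heuristically, $\alpha_t$ translates the whole ``history'' forward by $t$ and inserts the trivial datum on the new interval $[0,t]$ --- then verifies that $\{\alpha_t\}_{t\ge 0}$ is a strongly continuous semigroup of normal $*$-endomorphisms of $B(K)$ with $\phi_t(P_HAP_H)=P_H\alpha_t(A)P_H$ for all $A\in B(K)$.

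To obtain a minimal dilation, restrict $\alpha$ to $K_0:=\overline{\operatorname{span}}\{\alpha_{t_1}(P_HA_1P_H)\cdots\alpha_{t_n}(P_HA_nP_H)h: n\in\mathbb N,\ t_i\ge 0,\ A_i\in B(H),\ h\in H\}$; this subspace contains $H$ and is reducing for every $\alpha_t$, the key point being $\alpha_t(P_H)\ge P_H$, which holds because $\alpha_t(P_H)$ is a projection and, by the dilation identity applied to $A=P_H$ together with unitality of $\phi$, $P_H\alpha_t(P_H)P_H=P_H$. For uniqueness, let $(\alpha,K)$ and $(\alpha',K')$ be two minimal E$_0$-dilations of $\phi$. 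Using $\alpha_t(P_H)\ge P_H$, the endomorphism property, and $\alpha_t=\alpha_s\circ\alpha_{t-s}$ for $s\le t$, one rewrites any inner product of two of the spanning vectors of $K$ above --- by successively sliding the outermost $\alpha$'s down onto $H$ --- as a time-ordered composition of the maps $\phi_r$ applied to the $A_i$ and paired with $h$; in particular it depends only on $\phi$, not on the dilation. The same formula holds in $K'$, so the map matching the spanning vectors of $K$ with the correspondingly-indexed ones of $K'$ is a well-defined isometry, extends to a unitary $U\colon K\to K'$ fixing $H$, and (checked on the spanning vectors) intertwines $\alpha$ with $\alpha'$.

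The main obstacle is the existence half. Constructing the inductive limit itself is routine; the real work lies in verifying that the connecting isometries are mutually compatible along \emph{all} common refinements of partitions, that the maps $\alpha_t$ are genuine \emph{normal} $*$-endomorphisms of \emph{all of} $B(K)$ (not merely of some subalgebra) forming a \emph{strongly continuous} semigroup, and that the dilation identity holds. Complete positivity of each $\phi_t$ --- not mere positivity --- is exactly what keeps the relevant Gram matrices positive semidefinite at every stage of the construction, so it must be tracked throughout; it is likewise the crux in the alternative, more self-contained route that builds $K$ directly as the completion of the space of formal symbols $\alpha_{t_n}(A_n)\cdots\alpha_{t_1}(A_1)h$ equipped with the sesquilinear form forced by the dilation identity, where positive semidefiniteness of that form is the one nontrivial point.
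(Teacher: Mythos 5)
You should first note that the paper offers no proof of this statement at all: it is Bhat's theorem, quoted as background (Theorem 1.1) with a citation to \cite{Bhat1996}, and the dilation machinery this paper actually runs on is the Muhly--Solel correspondence/product-system approach of \cite{MS02}. So there is nothing in the paper to compare your argument against line by line; judged on its own, your sketch follows Bhat's original route (minimal Stinespring dilations of the individual $\phi_t$, realized as isometries $H\to H\otimes\cM_t$, glued along partitions into an inductive limit). That is a legitimate strategy, but as written it is an outline rather than a proof: you yourself defer the entire existence half --- compatibility of the connecting isometries under refinement, the verification that each $\alpha_t$ is a normal $*$-endomorphism of all of $B(K)$, strong continuity, and the dilation identity --- and that is where essentially all of the content of Bhat's theorem lives.

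The one step you do argue concretely has a genuine gap: the cut-down to a minimal dilation. From $\alpha_t(P_H)\ge P_H$ and the invariance of $K_0$ under the self-adjoint family $\{\alpha_t(P_HAP_H): t\ge 0,\ A\in B(K)\}$ you only get that $K_0$ reduces the von Neumann algebra generated by these ``history'' operators. To restrict $\alpha$ to an E$_0$-semigroup on $B(K_0)$ you must show that the compression $A\mapsto P_{K_0}\alpha_t(A)P_{K_0}$ is \emph{multiplicative} on all of $B(K_0)$, which is equivalent to $\alpha_t(B)K_0\subseteq K_0$ for \emph{every} $B\in B(K_0)$, not merely for $B$ in the history algebra; this does not follow from ``$\alpha_t(P_H)\ge P_H$'' --- indeed, compressing to the increasing projection $P_H$ itself yields only the CP$_0$-semigroup $\phi$, not an endomorphism semigroup, so increasing-ness alone cannot be the key point. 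In Bhat's proof this issue does not arise because the inductive-limit dilation is minimal by construction; alternatively, the clean way to cut an arbitrary dilation down to a minimal one is at the level of isometric product-system representations, exactly as in Section 3 of this paper (restrict to $L=\bigvee_{s}V_s(X(s))H$, where invariance and isometry are immediate), and then translate back. Finally, in the uniqueness argument, ``sliding the outermost $\alpha$'s down onto $H$'' needs the induction on not-necessarily-time-ordered monomials spelled out (using $\alpha_t(P_H)\ge P_H$ together with the semigroup law), though that part is standard once written carefully.
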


Bhat's Theorem aroused much interest, and one of the reasons was because it opened up a new way of constructing
E$_0$-semigroups. A possible approach could have been this: construct explicitly a tractable CP$_0$-semigroup, (for example a CP$_0$-semigroup on the algebra of $n \times n$ matrices or more generally a CP$_0$-semigroup with a bounded generator), and look at its minimal E$_0$-dilation. It was hoped at the time that the resulting E$_0$-semigroup would turn out to be an E$_0$-semigroup that has not been seen before.

These hopes were soon extinguished by results of Arveson and Powers.
\begin{theorem}\label{thm:Arv}
\emph{{\bf (Arveson, \cite[Theorem 4.8]{Arv99}).}} Let $\phi$ be a CP$_0$-semigroup with a bounded generator. The minimal E$_0$-dilation of $\phi$ is of type I.
\end{theorem}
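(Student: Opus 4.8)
The plan is to reduce the statement to a fact about Arveson product systems. Recall that every E$_0$-semigroup $\alpha$ on $B(K)$ carries a product system $E^{\alpha}$ which is a complete cocycle-conjugacy invariant, that $\alpha$ is of type I exactly when $E^{\alpha}$ is a type I product system (equivalently, $E^{\alpha}$ is generated as a product system by its units), and that in that case $E^{\alpha}\cong\Gamma\big(\mb{C}^d\otimes L^2(\Rp)\big)$ for $d$ the index. Recall also that the product system of a CP$_0$-semigroup $\phi$ is canonically isomorphic to that of its minimal E$_0$-dilation. Hence it suffices to show that when the generator $\cL$ of $\phi$ is bounded, $E^{\phi}$ is generated by its units.

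First I would put $\cL$ into Christensen--Evans form: there exist $K\in B(H)$ and $V_1,\dots,V_d\in B(H)$, with $d\in\{0,1,\dots,\infty\}$ minimal and $\sum_i V_i^*V_i$ strongly convergent, such that $\cL(A)=\sum_i V_i^*AV_i+K^*A+AK$, and $\cL(1)=0$ forces $K+K^*+\sum_iV_i^*V_i=0$. Fix a Hilbert space $\mfq$ of dimension $d$ with orthonormal basis $(e_i)$. For $\zeta\in\mfq$ and $z\in\mb{C}$, let $c^{\zeta,z}$ be the norm-continuous one-parameter semigroup on $H$ generated by $K+\sum_i\langle e_i,\zeta\rangle V_i+z$. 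Using the GNS description of $E^{\phi}$ built from $\cL$ --- equivalently, by checking that the natural $2\times2$ amplification assembled from $\phi_t$ and $c^{\zeta,z}_t$ is completely positive for each $t$ --- one shows that $c^{\zeta,z}$ determines a unit $u^{\zeta,z}\in E^{\phi}$, and that the inner products $\langle u^{\zeta,z}_t,u^{\zeta',z'}_t\rangle$ have precisely the exponential form of units in an index-$d$ exponential product system, the covariance being read off directly from the bounded data $K,V_i$.

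The last and principal step is to show that the $u^{\zeta,z}$ generate all of $E^{\phi}$, so that $E^{\phi}\cong\Gamma\big(\mb{C}^d\otimes L^2(\Rp)\big)$ is type I, and therefore the minimal dilation $\alpha$ is a type I E$_0$-semigroup --- in fact cocycle conjugate to the CCR flow of index $d$ (with $d=0$, the automorphism case, degenerate). This is exactly where boundedness of $\cL$ is indispensable: the sub-product-system generated by the $u^{\zeta,z}$ already has index $d$ (the $d$ independent ``directions'' coming from $\mfq$), while the Christensen--Evans presentation with $d$ generators bounds the index of $E^{\phi}$ itself above by $d$; comparing these forces equality. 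Equivalently, the GNS correspondence of the bounded map $\cL$ is generated over $B(H)$ by the vectors coming from $K$ and the $V_i$, so there is no room for any type II or type III ``noise'' --- a phenomenon which does occur for unbounded generators, so boundedness cannot be dropped. As a concrete companion one may observe that the Hudson--Parthasarathy quantum stochastic dilation driven by the $V_i$ realizes an E$_0$-dilation of $\phi$ as a unitary-cocycle perturbation of the CCR flow of index $d$, which is manifestly type I. I expect this final ``no extra noise'' step to be the main obstacle; everything preceding it is soft structure theory.
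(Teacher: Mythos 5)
The paper itself offers no proof of this statement --- it is quoted from Arveson \cite[Theorem 4.8]{Arv99} --- so the only question is whether your sketch would stand on its own, and at the decisive point it does not. Your setup is indeed the skeleton of Arveson's argument: pass to the product system of the minimal dilation, write the bounded generator in Christensen--Evans form $\cL(A)=\sum_i V_i^*AV_i+K^*A+AK$, and obtain units $u^{\zeta,z}$ from the norm-continuous contraction semigroups generated by $K+\sum_i\langle e_i,\zeta\rangle V_i+z$ via complete positivity of the associated $2\times2$ block maps. The gap is in your final step. You argue that the subsystem generated by the $u^{\zeta,z}$ has index $d$, that the presentation bounds the index of the full product system above by $d$, and that ``comparing these forces equality,'' hence type I. But equality of indices does not force the product system to be generated by its units: by definition, a type II Arveson system has the same index as the type I subsystem generated by its units, so $\mathrm{index}=d$ is perfectly compatible with the system strictly containing that subsystem. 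The entire content of Arveson's theorem is to exclude precisely this possibility, and his proof of complete spatiality of the minimal dilation is a genuine analytic argument that exploits norm continuity of $\phi$ (boundedness of $\cL$) to prove totality of the unit sections; it is not an index count. Your parenthetical reformulation (``the GNS correspondence of $\cL$ is generated by $K$ and the $V_i$, so there is no room for noise'') is likewise not equivalent to, and does not yield, generation of the product system of the dilation by its units. Even the upper bound ``index $\leq d$'' is not soft: it requires Arveson's classification of all units of a CP$_0$-semigroup with bounded generator, plus the dilation-invariance of the index, both of which are theorems in the same circle of ideas.

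Two smaller remarks. The claim that the product system of a CP$_0$-semigroup is ``canonically isomorphic'' to that of its minimal E$_0$-dilation should be handled with care, since in Arveson's framework the product system of a CP semigroup is essentially defined through the minimal dilation (or through a Bhat--Skeide-type construction), so as stated the reduction risks circularity. Your Hudson--Parthasarathy aside is actually the germ of a legitimate alternative route --- a unitary cocycle perturbation of a CCR flow is manifestly type I --- but to conclude anything about the \emph{minimal} dilation you must still relate its product system to that of the HP dilation (e.g. realize it as a product subsystem of the Fock system and invoke the nontrivial fact that, for Arveson systems of Hilbert spaces, such subsystems are again type I); as written it is an observation, not a proof.
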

Independently, Powers proved that the minimal E$_0$-dilation of a CP$_0$-semigroup acting on the algebra $M_n(\mathbb{C})$ of $n \times n$ matrices is of type I (\cite[Theorem 3.10]{Pow99}). Although Powers' result is contained in Arveson's result, it is worth mentioning his paper not only because he reached the result
using completely different methods, but also because that paper contains an independent proof (which seems to have been forgotten) of the existence of an E$_0$-dilation for CP$_0$-semigroups on matrix algebras.

In \cite{Shalit07a} we raised the question whether every \emph{two-parameter} CP$_0$-semigroup
has a (two-parameter) E$_0$-dilation. We obtained a partial result, which for our purposes in this note
can be stated as follows:
\begin{theorem}\label{thm:scedil}
\emph{{\bf (\cite[Theorem 6.6]{Shalit07a}).}} Let $\phi$ and $\theta$ be two strongly commuting CP$_0$-semigroups on $B(H)$, where $H$ is a separable Hilbert space. Then there is a separable Hilbert space $K\supseteq H$ and two commuting
E$_0$-semigroups $\alpha$ and $\beta$ on $B(K)$ such that
$$\phi_s \circ \theta_t (P_H A P_H) = P_H \alpha_s \circ \beta_t (A) P_H$$
for all $s,t \geq 0$ and all $A \in B(K)$.
\end{theorem}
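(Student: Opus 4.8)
\bigskip
\noindent\textbf{Proof strategy for Theorem \ref{thm:scedil}.}

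The plan is to replace the pair of commuting CP$_0$-semigroups by a single object and dilate that object. To $\phi$ alone the Muhly--Solel machinery attaches a product system $(F^\phi_s)_{s\ge 0}$ of $W^*$-correspondences over $M=B(H)$ together with a completely contractive covariant representation whose induced semigroup of maps is $\phi$; similarly for $\theta$. The point of the hypothesis that $\phi$ and $\theta$ \emph{strongly} commute is that it lets one amalgamate these into one product system $E=(E(s,t))_{(s,t)\in\mathbb{R}_+^2}$ over $M$, with $E(s,t)\cong F^\phi_s\otimes F^\theta_t\cong F^\theta_t\otimes F^\phi_s$ in a way compatible with the product-system structure, and to combine the two covariant representations into a single completely contractive covariant representation $(T,\sigma)$ of $E$ on $H$ satisfying
\[
\phi_s\circ\theta_t(a)=\widetilde{T}_{s,t}\bigl(I_{E(s,t)}\otimes a\bigr)\widetilde{T}_{s,t}^{\,*},\qquad a\in M,
\]
where $\widetilde{T}_{s,t}\colon E(s,t)\otimes_\sigma H\to H$ is the map determined by $T$. (One may also do this concretely: fix $\varepsilon>0$, treat $\phi_\varepsilon$ and $\theta_\varepsilon$ as two commuting CP maps, run the discrete version of the construction over $\mathbb{N}_0^2$, and take an inductive limit as $\varepsilon\to 0$.)

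The heart of the matter is the dilation step: the completely contractive covariant representation $(T,\sigma)$ dilates to an \emph{isometric} covariant representation $(V,\rho)$ of $E$ on a Hilbert space $K\supseteq H$ for which $H$ is coinvariant, and a further routine dilation makes $(V,\rho)$ \emph{fully coisometric}. In the discrete picture this is an And\^o-type statement: one must isometrically dilate the two contractions coming from $(T,\sigma)$ \emph{simultaneously} while keeping them intertwined through $E(1,1)$, and it is exactly the single amalgamated correspondence furnished by strong commutation that lets the two-operator And\^o argument run (the analogous statement for three or more ``directions'' being false in general). This is also where the main obstacle lies: one has to make the discrete And\^o-type dilations over the grids $(\varepsilon\mathbb{N}_0)^2$ cohere as $\varepsilon\to 0$ and pass to an inductive limit whose resulting product-system representation over $\mathbb{R}_+^2$ is \emph{continuous} in $(s,t)$ --- securing the measurability/continuity of the limit, not merely its algebraic existence, is the delicate part. (A naive alternative --- first dilate $\phi$ to an E$_0$-semigroup via Bhat's Theorem and then ``lift'' $\theta$ so that it commutes with the dilation --- runs into essentially this same difficulty, so proceeding symmetrically through $E$ seems preferable.)

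Once $(V,\rho)$ is in hand, the Muhly--Solel dictionary turns an isometric, fully coisometric, continuous covariant representation of $E$ over $\mathbb{R}_+^2$ on $B(K)$ into a pair of E$_0$-semigroups on $B(K)$, namely $\alpha_s(\,\cdot\,)=\widetilde{V}_{s,0}(I\otimes\,\cdot\,)\widetilde{V}_{s,0}^{\,*}$ and $\beta_t(\,\cdot\,)=\widetilde{V}_{0,t}(I\otimes\,\cdot\,)\widetilde{V}_{0,t}^{\,*}$. The isomorphisms $E(s,t)\cong E(s,0)\otimes E(0,t)\cong E(0,t)\otimes E(s,0)$ give $\alpha_s\circ\beta_t=\beta_t\circ\alpha_s=\widetilde{V}_{s,t}(I\otimes\,\cdot\,)\widetilde{V}_{s,t}^{\,*}$, so $\alpha$ and $\beta$ commute. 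Finally, since $H$ is coinvariant for $(V,\rho)$ and $(T,\sigma)$ is its compression, compressing the last identity to $H$ yields $P_H\,\alpha_s\circ\beta_t(A)\,P_H=\phi_s\circ\theta_t(P_H A P_H)$ for all $A\in B(K)$ and $s,t\ge 0$, which is the assertion; replacing $K$ by the smallest subspace containing $H$ and reducing $B(H)$, $\alpha(B(H))$ and $\beta(B(H))$ makes the dilation minimal should one want it.
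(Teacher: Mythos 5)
Your strategy coincides with the paper's: as recalled in Section 4, the proof in \cite{Shalit07a} amalgamates the Muhly--Solel product systems of the two strongly commuting semigroups into a single product system over $\mathbb{R}_+^2$ carrying a fully coisometric c.c.\ representation $T$ of $\phi_s\circ\theta_t$, dilates $T$ to an isometric, fully coisometric representation $V$ on $K\supseteq H$ (precisely via the discrete And\^o-type dilations and the continuity-preserving limiting argument you outline), and then reads off $\alpha$ and $\beta$ from the restrictions of $V$ to the two coordinate axes, exactly as in your last paragraph. The only slight inaccuracy is that no ``further routine dilation'' is needed to achieve full coisometricity: the minimal isometric dilation of a fully coisometric representation is automatically fully coisometric (cf.\ the proof of Theorem \ref{thm:restrict}).
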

For a definition of \emph{strong commutation} see Section 4 in \cite{Shalit07a}. Let us point out that every pair of CP$_0$-semigroups on $M_n(\mathbb{C})$ that commute do so strongly. Thus we have:
\begin{corollary}\label{cor:M_n(C)}
\emph{{\bf (\cite[Corollary 6.7]{Shalit07a}).}}
Every two-parameter CP$_0$-semigroup on $M_n(\mathbb{C})$ has an E$_0$-dilation.
\end{corollary}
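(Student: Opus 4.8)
The plan is to deduce the statement directly from Theorem~\ref{thm:scedil}; the only real content is matching up the hypotheses. First I would unwind the definition of a two-parameter CP$_0$-semigroup on $M_n(\mathbb{C})$: a family $\{P_{s,t}\}_{s,t\geq0}$ of normal, unital, completely positive contractions on $M_n(\mathbb{C})$ with $P_{0,0}=\mathrm{id}$, the semigroup law $P_{s+s',t+t'}=P_{s,t}\circ P_{s',t'}$, and the appropriate weak continuity in $(s,t)$. Setting $\phi_s:=P_{s,0}$ and $\theta_t:=P_{0,t}$, one reads off from the semigroup law that $\phi$ and $\theta$ are one-parameter CP$_0$-semigroups on $M_n(\mathbb{C})$, that $\phi_s\circ\theta_t=P_{s,t}=\theta_t\circ\phi_s$, and in particular that $\phi$ and $\theta$ commute.

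The main step is to promote this commutation to \emph{strong} commutation in the sense of Section~4 of \cite{Shalit07a}, so that Theorem~\ref{thm:scedil} applies. Here is where finite-dimensionality enters: since $\dim M_n(\mathbb{C})<\infty$, both semigroups are norm-continuous with bounded generators, and these generators admit finite Lindblad/Kraus-type presentations. Unpacking the definition of strong commutation into a statement about this bounded generator data, one expects ordinary commutation of $\phi$ and $\theta$ to already force the compatibility condition required; this is precisely the remark made just before the statement, and I would expect it to be the technical crux of the argument.

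Granting that, the conclusion is formal. Apply Theorem~\ref{thm:scedil} to the strongly commuting pair $(\phi,\theta)$ with $H=\mathbb{C}^n$: we obtain a separable Hilbert space $K\supseteq\mathbb{C}^n$ and commuting E$_0$-semigroups $\alpha,\beta$ on $B(K)$ with $\phi_s\circ\theta_t(P_HAP_H)=P_H\,\alpha_s\circ\beta_t(A)\,P_H$ for all $s,t\geq0$ and $A\in B(K)$. Put $\gamma_{s,t}:=\alpha_s\circ\beta_t$. Because $\alpha$ and $\beta$ are commuting E$_0$-semigroups, $\gamma$ is a two-parameter E$_0$-semigroup on $B(K)$ (the endomorphism property, unitality and continuity are inherited, and the two-parameter semigroup law follows from $\alpha_s\beta_t\alpha_{s'}\beta_{t'}=\alpha_{s+s'}\beta_{t+t'}$ using commutation), and the displayed identity says exactly that $\gamma_{s,t}$ dilates $P_{s,t}=\phi_s\circ\theta_t$. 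Hence $\gamma$ is an E$_0$-dilation of $\{P_{s,t}\}$, which proves the statement. The only non-bookkeeping ingredient is the passage from commutation to strong commutation in the second step.
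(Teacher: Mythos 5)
Your proposal is correct and is essentially the paper's own argument: the paper simply notes (deferring to \cite{Shalit07a}) that commuting CP$_0$-semigroups on $M_n(\mathbb{C})$ automatically commute strongly, and then applies Theorem \ref{thm:scedil} to the marginal semigroups $\phi_s=P_{s,0}$, $\theta_t=P_{0,t}$, exactly as you do. You correctly identify the commutation-to-strong-commutation step as the only non-formal ingredient, and the paper likewise treats it as a quoted fact rather than proving it here.
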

One can also show that these dilations are minimal in an appropriate sense, but we shall not
make use of the minimality of the two-parameter dilations in this note.

The last two results face us against two immediate problems:
\begin{enumerate}
\item Figure out the structure of the E$_0$-dilation of a given two-parameter CP$_0$-semigroup, especially
in the simplest case when the the CP$_0$-semigroup acts on $M_n(\mathbb{C})$.
\item Try to see whether new E$_0$-semigroups (necessarily not of type I) can arise as ``parts" of the E$_0$-dilation
of a two-parameter CP$_0$-semigroup which is ``simple" in some sense (e.g. - acts on $M_n(\mathbb{C})$).
\end{enumerate}

In this note, we obtain a partial positive result related to the first problem and a partial negative result related to the second one.
Referring to the notation of Theorem \ref{thm:scedil}, we show that if $\phi$ is not an automorphism semigroup then $\alpha$ is cocycle conjugate to the minimal E$_0$-dilation of $\phi$, and that if $\phi$ is an automorphism semigroup then $\alpha$ is also an automorphism semigroup (and in this case it is cocycle conjugate to $\phi$, which is its own minimal dilation, if and only if $H$ is infinite dimensional). In particular, we conclude that if $\phi$ is not an automorphism semigroup and has a bounded generator (in particular, if $H$ is finite dimensional) then $\alpha$ is a type I E$_0$-semigroup. Needless to say, the same results hold with $\phi$ and $\alpha$ replaced by $\theta$ and $\beta$, respectively.
\begin{remark}\emph{
We emphasize that all this is true when $\alpha$ is the E$_0$-semigroup constructed in the proof of
Theorem \ref{thm:scedil} (\cite[Theorem 6.6]{Shalit07a}). It is not expected that the E$_0$-dilation
of a two-parameter CP$_0$-semigroup be unique (even under a minimality assumption) thus we
state explicitly that all our conclusions are true only for this particular dilation.}
\end{remark}

We are still very far from solving the two problems mentioned above. The first problem is not solved because it is not clear whether the cocycle conjugacy classes of $\alpha$ and $\beta$ determine in any reasonable way the two-dimensional dynamic behavior of the the E$_0$-semigroup $\{\alpha_s \circ \beta_t \}_{s,t \geq 0}$. Let us be a little more concrete in what we mean by this. One may attempt to define the notion of \emph{cocycle equivalence} of two-parameter E$_0$-semigroups exactly as it was defined for one-parameter semigroups, the only difference being that cocycles are now
\emph{two-parameter} families of unitaries. Now assume that $\alpha, \beta$ and $\alpha', \beta'$ are two pairs of commuting E$_0$-semigroups such that $\alpha$ and $\beta$ are cocycle conjugate to $\alpha'$ and $\beta'$, respectively.
In this situation, it is not clear whether the two-parameter semigroups $\{\alpha_s \circ \beta_t\}_{s,t\geq}$
and $\{\alpha'_s \circ \beta'_t\}_{s,t\geq}$ are cocycle conjugate.

The second problem is not solved because we have not ruled out the possibility that for some $a,b > 0$, the one-parameter E$_0$-semigroup $\gamma = \{\gamma_t\}_{t \geq 0}$ given by
$$\gamma_t := \alpha_{at} \circ \beta_{bt}$$
is one that has not been seen before.

\begin{remark}\emph{
This note is a sequel to \cite{Shalit07a}, and the results here depend on the constructions made there. To avoid many
repetitions, we shall refer the reader to that paper for many definitions, constructions and results, as well as for the preliminaries.}
\end{remark}

\section{The simplest case}
Perhaps the simplest kind of two-parameter CP$_0$-semigroups arise as semigroups on $B(H_1 \otimes H_2)$ of the form
\be\label{eq:product}
\psi_{(s,t)} = \phi_s \otimes \theta_t ,
\ee
where $\phi$ is a CP$_0$-semigroup on $B(H_1)$ and $\theta$ is a CP$_0$-semigroup on $B(H_2)$.  It is almost immediate from the definitions that $\psi_{(s,0)} = \phi_s \otimes \textrm{\bf id} $ and $ \psi_{(0,t)} = \textrm{\bf id} \otimes \theta_t$ commute strongly for all $s,t \geq 0$. However, we do not need to appeal to Theorem \ref{thm:scedil} to construct a minimal dilation of $\psi$. If $\alpha$ is the minimal E$_0$-dilation of $\phi$ (acting on $B(K_1)$), and $\beta$ is the minimal E$_0$-dilation of $\theta$ (acting on $B(K_2)$), then the semigroup $\gamma$ acting on $B(K_1 \otimes K_2)$ and given by
\be\label{eq:product2}
\gamma_{(s,t)} = \alpha_s \otimes \beta_t
\ee
is a minimal E$_0$-dilation of $\psi$. Indeed, for all $A \in B(K_1), B \in B(K_2)$,
\begin{align*}
P_{H_1 \otimes H_2} \gamma_{(s,t)} (A \otimes B) P_{H_1 \otimes H_2} &= P_{H_1} \alpha_s (A) P_{H_1} \otimes P_{H_2} \beta_t (B) P_{H_2} \\
&= \phi_s(P_{H_1} A P_{H_1}) \otimes \theta_t (P_{H_2} B P_{H_2}) \\
&= \psi_{(s,t)} \left(P_{H_1 \otimes H_2} (A \otimes B) P_{H_1 \otimes H_2} \right),
\end{align*}
because $P_{H_1 \otimes H_2} = P_{H_1} \otimes P_{H_2}$. To prove that $\gamma$ is a minimal dilation of
$\psi$, we have to show that central support of $P_{H_1 \otimes H_2}$ in $B(K_1 \otimes K_2)$ is $1$, and that
$$W^* \left(\bigcup_{s,t \geq 0} \gamma_{(s,t)}\left(B(H_1 \otimes H_2)\right)\right) = B(K_1 \otimes K_2) .$$
The latter follows from the equalities $B(K_1) = W^*\left(\bigcup_{t\geq 0} \alpha_t(B(H_1))\right)$ and $B(K_2) = W^*\left(\bigcup_{t\geq 0} \beta_t(B(H_2))\right)$, while the former is obvious.

We note that the above discussion works for CP-semigroups $\phi$ and $\theta$ acting on von Neumann algebras $\cM_1$ and $\cM_2$. The only issue that has to be addressed is that of minimality: using \cite[Corollary III.1.5.8]{Blackadar}, (which states that if the central
support of $P_{H_1}$ in $\cR_1$ is $1_{K_1}$ and the central support of $P_{H_2}$ in $\cR_2$ is $1_{K_2}$, then the central support of $P_{H_1} \otimes P_{H_2} = P_{H_1 \otimes H_2}$ in $\cR_1 \otimes \cR_2$ is $1_{K_1} \otimes 1_{K_2} = 1$), one may show that if $(\alpha,\cR_1,K_1)$ and $(\beta,\cR_2,K_2)$ are the minimal dilations
of $(\phi,\cM_1)$ and $(\theta,\cM_2)$, respectively, then $\gamma$ of
(\ref{eq:product2})
is the minimal dilation of $\psi$.

Of course, not all strongly commuting two-parameter CP$_0$-semigroups have the form (\ref{eq:product}) - this can be seen by considering two nontrivial commuting CP$_0$-semigroups on $M_n (\mathbb{C})$ with $n$ prime. However, we will see below that for general strongly commuting CP$_0$-semigroups, the E$_0$-dilation given by Theorem \ref{thm:scedil} is
also ``made up from" the minimal dilations.

\section{Restricting an isometric dilation to a minimal isometric dilation}\label{sec:restricting}

Let $\cS$ be a semigroup, let $X = \{X(s)\}_{s\in\cS}$ be a product system over $\cS$ and let $T$ be a completely contractive covariant representation of $X$ on a Hilbert space $H$. Let $V$ be an isometric dilation of $T$ on a Hilbert space $K \supseteq H$. Define
$$L = \bigvee_{s\in\cS} V_s (X(s)) H .$$
For all $s\in \cS$ and $x \in X(s)$, $L$ is invariant under $V_s(x)$. As $T_0$ is assumed to be nondegenerate, $H \subseteq L$.
We define define a map $W_s: X(s) \rightarrow B(L)$ by
$$W_s(x) = V_s(x) \big|_L .$$
$W = \{W_s\}_{s\in\cS}$ is a representation of $X$ on $L$. Indeed, if $s,t \in \cS$, $x \in X(s), y \in X(t)$ and $l \in L$, then
\begin{align*}
W_{s+t}(x \otimes y)l &= V_{s+t}(x \otimes y)l \\
&= V_s(x) V_t(y) l \\
&= W_s(x) W_t(y) l.
\end{align*}
Clearly, $W$ has the same continuity properties as $V$. In particular, if
$X$ is a product system of W$^*$-correspondences and $V$ is a representation of W$^*$-correspondences (i.e. - $V_s$ is continuous with respect to the $\sigma$-topology on $X(s)$ and the $\sigma$-weak operator topology on $B(K)$), then so is $W$. To see that $W$ is isometric, we first compute $\widetilde{W}$. For $s\in \cS$ and $x \in X(s)$ and $l \in L$ we have
$$\widetilde{W}_s(x \otimes l) = W_s(x) l = V_s(x) l = \widetilde{V}_s(x \otimes l),$$
thus $\widetilde{W}_s = \widetilde{V}_s \big|_{X(s)\otimes L}$. Thus
$$\widetilde{W}_s^* \widetilde{W}_s = P_{X(s)\otimes L} \widetilde{V}_s^* \widetilde{V}_s \big|_{X(s)\otimes L} = I_{X(s)\otimes L} .$$
Most importantly for us, $W$ is also a dilation of $T$: if $s \in \cS$, $x \in X(s)$ and $h \in H$, then
\begin{align*}
P_H W_s(x) h &= P_H V_s(x)\big|_{L} h \\
&= T_s(x) h .
\end{align*}
It is obvious that $W$ is a minimal dilation of $T$, because
$$L = \bigvee_{s\in\cS} V_s (X(s)) H = \bigvee_{s\in\cS} W_s (X(s)) H .$$

\begin{definition}
$W$ is called the \emph{restriction of $V$  to a minimal isometric dilation of $T$}.
\end{definition}

The discussion establishes the following theorem:
\begin{theorem}\label{thm:restrict0}
Let $\cS$ be a semigroup, let $X = \{X(s)\}_{s\in\cS}$ be a product system over $\cS$ and let $T$ be a c.c. representation of $X$. Every isometric dilation of $T$ can be restricted to a minimal
isometric dilation of $T$.
\end{theorem}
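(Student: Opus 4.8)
The plan is to verify the claim essentially by unwinding the definitions of the preceding discussion, which already does all the work. The theorem asserts: given a c.c. representation $T$ of a product system $X = \{X(s)\}_{s\in\cS}$ over a semigroup $\cS$, and an arbitrary isometric dilation $V$ of $T$ on $K \supseteq H$, the family $W$ obtained by compressing each $V_s(x)$ to the subspace $L := \bigvee_{s\in\cS} V_s(X(s))H$ is a minimal isometric dilation of $T$. So the proof is just to collect the four things that need checking and present them in order.

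First I would record that $L$ is invariant under every $V_s(x)$ (because for $t\in\cS$, $y\in X(t)$ and $h\in H$ one has $V_s(x)V_t(y)h = V_{s+t}(x\otimes y)h \in L$, and these vectors span $L$), and that $H \subseteq L$ using nondegeneracy of $T_0$; this is what makes $W_s(x) := V_s(x)|_L$ a well-defined operator on $L$ and makes the compression formula $P_H W_s(x) h = P_H V_s(x) h = T_s(x)h$ meaningful. Second, I would check that $W$ is a representation of $X$: the semigroup identity $W_{s+t}(x\otimes y) = W_s(x)W_t(y)$ on $L$ follows from the corresponding identity for $V$ restricted to the invariant subspace $L$, and the continuity/measurability properties of $W$ are inherited from those of $V$ (in the W$^*$ setting, $\sigma$-weak continuity of $W_s$ on $X(s)$ follows from that of $V_s$ since compression to an invariant subspace is $\sigma$-weakly continuous). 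Third, I would verify that $W$ is isometric: computing $\widetilde{W}_s(x\otimes l) = W_s(x)l = V_s(x)l = \widetilde{V}_s(x\otimes l)$ shows $\widetilde{W}_s = \widetilde{V}_s|_{X(s)\otimes L}$, and since $X(s)\otimes L$ is invariant under $\widetilde{V}_s$ (as $L$ is invariant under each $V_s(x)$), we get $\widetilde{W}_s^*\widetilde{W}_s = P_{X(s)\otimes L}\,\widetilde{V}_s^*\widetilde{V}_s|_{X(s)\otimes L} = P_{X(s)\otimes L}|_{X(s)\otimes L} = I_{X(s)\otimes L}$, using that $V$ is isometric. Fourth, the dilation property $P_H W_s(x)|_H = T_s(x)$ is the compression computation already noted, and minimality is immediate since $L = \bigvee_s V_s(X(s))H = \bigvee_s W_s(X(s))H$ by construction.

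The main point requiring a little care — the only place where something could go wrong — is the claim that $X(s)\otimes L$ is invariant under $\widetilde{V}_s$, which is what lets the projection pass through in the computation of $\widetilde{W}_s^*\widetilde{W}_s$; this is exactly equivalent to $L$ being invariant under each $V_s(x)$, so it reduces to the first step. A secondary subtlety is making sure $H\subseteq L$, i.e. that the nondegeneracy hypothesis on $T_0$ is genuinely used; without it $W$ would not literally be a dilation of $T$ in the required sense. Everything else is bookkeeping, so I would simply remark that the discussion preceding the statement constitutes the proof and, if desired, restate it compactly as above.
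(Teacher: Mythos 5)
Your proposal is correct and follows essentially the same route as the paper: the paper's proof is exactly the discussion preceding the theorem (invariance of $L$, restriction $W_s(x)=V_s(x)|_L$, the computation $\widetilde{W}_s=\widetilde{V}_s|_{X(s)\otimes L}$ giving isometry, the compression identity, and minimality from $L=\bigvee_s W_s(X(s))H$), which you reproduce with the small details filled in. (One tiny remark: the identity $\widetilde{W}_s^*\widetilde{W}_s=P_{X(s)\otimes L}\widetilde{V}_s^*\widetilde{V}_s|_{X(s)\otimes L}=I_{X(s)\otimes L}$ needs only that $V$ is isometric, not that $\widetilde{V}_s$ maps $X(s)\otimes L$ into $L$; that containment is what makes $W$ well defined, as you note earlier.)
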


For our purposes below, we need a specialization of the above theorem:
\begin{theorem}\label{thm:restrict}
Let $X = \{X(t)\}_{t\geq0}$ be a product system of W$^*$-correspondences over $\Rp$ and let $T$ be a fully-coisometric c.c. representation of $X$ on $H$. Every isometric dilation of $T$ can be restricted to a minimal
isometric and fully-coisometric dilation of $T$.
\end{theorem}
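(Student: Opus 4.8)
The plan is to build on the construction already performed in the discussion preceding Theorem~\ref{thm:restrict0}. Given an isometric dilation $V$ of $T$ on $K$, form $L = \bigvee_{t\geq 0} V_t(X(t))H$ and set $W_t(x) = V_t(x)\big|_L$. From that discussion we already know that $W$ is a minimal isometric dilation of $T$ and that it inherits the continuity properties of $V$, so it is a representation of W$^*$-correspondences whenever $V$ is. Hence the only new content in Theorem~\ref{thm:restrict} is that, when $T$ is fully-coisometric, so is $W$, i.e. $\widetilde W_t \widetilde W_t^* = I_L$ for every $t>0$. Since each $\widetilde W_t$ is already an isometry (this was checked in the proof of Theorem~\ref{thm:restrict0}), this amounts to showing that $\widetilde W_t$ is onto, equivalently that $L = \overline{W_t(X(t))L}$ for all $t>0$.

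The crux is the claim that for every $\epsilon>0$ one has $H\subseteq \overline{W_\epsilon(X(\epsilon))L}$. Fix $g\in H$. Since $\widetilde W_\epsilon = \widetilde V_\epsilon\big|_{X(\epsilon)\otimes L}$ and $V$ dilates $T$, one has $P_H\widetilde W_\epsilon(x\otimes h) = P_H V_\epsilon(x)h = T_\epsilon(x)h = \widetilde T_\epsilon(x\otimes h)$ for $x\in X(\epsilon),\,h\in H$, so $P_H\widetilde W_\epsilon\big|_{X(\epsilon)\otimes H} = \widetilde T_\epsilon$; taking adjoints gives $P_{X(\epsilon)\otimes H}\widetilde W_\epsilon^* g = \widetilde T_\epsilon^* g$. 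Now $\widetilde W_\epsilon^*$ is a contraction and, because $T$ is fully-coisometric, $\widetilde T_\epsilon^*$ is an isometry, so
\[
\|g\| = \|\widetilde T_\epsilon^* g\| = \|P_{X(\epsilon)\otimes H}\widetilde W_\epsilon^* g\| \leq \|\widetilde W_\epsilon^* g\| \leq \|g\|.
\]
Therefore $\|\widetilde W_\epsilon^* g\| = \|g\|$, and since $\widetilde W_\epsilon\widetilde W_\epsilon^*$ is the orthogonal projection onto $\overline{W_\epsilon(X(\epsilon))L}$ with $\|\widetilde W_\epsilon\widetilde W_\epsilon^* g\| = \|\widetilde W_\epsilon^* g\| = \|g\|$, it follows that $g\in\overline{W_\epsilon(X(\epsilon))L}$, proving the claim.

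With the claim available I would finish as follows. Fix $t>0$ and let $W_s(x)h$ (with $s\geq 0$, $x\in X(s)$, $h\in H$) be a typical generator of $L$. If $s\geq t$, approximate $x$ by sums of elementary tensors under $X(s)\cong X(t)\otimes X(s-t)$ and use $W_s(x'\otimes x'') = W_t(x')W_{s-t}(x'')$ together with $W_{s-t}(x'')h\in L$ to see that $W_s(x)h\in\overline{W_t(X(t))L}$. If $0\leq s<t$, apply the claim with $\epsilon = t-s$ to write $h$ as a limit of sums $\sum_i W_\epsilon(y_i)l_i$ with $l_i\in L$; then $W_s(x)h$ is a limit of sums $\sum_i W_s(x)W_\epsilon(y_i)l_i = \sum_i W_t(x\otimes y_i)l_i\in W_t(X(t))L$. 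In both cases $W_s(x)h\in\overline{W_t(X(t))L}$, hence $L = \overline{W_t(X(t))L} = \operatorname{Range}\widetilde W_t$, and being also an isometry, $\widetilde W_t$ is unitary, so $\widetilde W_t\widetilde W_t^* = I_L$. As $t>0$ was arbitrary, $W$ is fully-coisometric.

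I expect the only genuinely nonroutine point to be the norm-rigidity step in the second paragraph: recognizing $P_{X(\epsilon)\otimes H}\widetilde W_\epsilon^* g = \widetilde T_\epsilon^* g$ and noting that full-coisometry of $T$ is precisely what makes this projection norm-preserving, forcing $g$ into the range of $\widetilde W_\epsilon$; everything afterward is bookkeeping with the product-system structure. (An alternative would be to invoke uniqueness of the minimal isometric dilation together with the known fact that the minimal isometric dilation of a fully-coisometric representation is fully-coisometric, but the direct argument above is cleaner and keeps the note self-contained.)
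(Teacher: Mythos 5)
Your argument is correct, but it takes a genuinely different route from the paper. The paper's proof is a two-line reduction: it observes that the restriction is a minimal isometric dilation (Theorem \ref{thm:restrict0}), invokes the uniqueness (up to unitary equivalence) of the minimal isometric dilation, and then quotes Muhly--Solel's result that the minimal isometric dilation of a fully-coisometric representation is fully-coisometric --- exactly the alternative you mention parenthetically at the end. You instead prove full-coisometry of $W$ directly: the identity $P_{X(\epsilon)\otimes H}\widetilde{W}_\epsilon^* g = \widetilde{T}_\epsilon^* g$ together with the fact that $\widetilde{T}_\epsilon^*$ is isometric forces $\|\widetilde{W}_\epsilon^* g\| = \|g\|$, hence $H \subseteq \overline{W_\epsilon(X(\epsilon))L}$, and the semigroup structure then pushes every generator $W_s(x)h$ of $L$ into $\operatorname{Range}\widetilde{W}_t$. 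This buys self-containedness (no appeal to uniqueness of minimal dilations or to \cite[Theorem 3.7]{MS02}) and isolates exactly where full-coisometry of $T$ enters, at the cost of being longer than the paper's citation-based proof. One small refinement: in the case $s \geq t$ you approximate $x \in X(s)$ by sums of elementary tensors under $X(s) \cong X(t)\otimes X(s-t)$; in the W$^*$-setting such sums are only $\sigma$-dense rather than norm-dense, so you should either pass through the $\sigma$-continuity of $W_s$ and the weak closedness of the subspace $\overline{W_t(X(t))L}$, or, more cleanly, use the operator identity $\widetilde{W}_s = \widetilde{W}_t\bigl(I_{X(t)}\otimes\widetilde{W}_{s-t}\bigr)$, which gives $\operatorname{Range}\widetilde{W}_s \subseteq \operatorname{Range}\widetilde{W}_t$ for $s \geq t$ without any approximation.
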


\begin{proof}
All we have to do is to show that the restriction of any isometric dilation of $T$ to a minimal one is fully-coisometric. By a standard computation the minimal isometric dilation of $T$ is unique, up to unitary equivalence. By \cite[Theorem 3.7]{MS02}, the minimal isometric dilation of $T$ is fully-coisometric.
\end{proof}

\section{The type of dynamics that arise in a two-parameter dilation}

Let us fix notation for this section. $H$ is a separable Hilbert space, $\phi$ and $\theta$ are strongly commuting CP$_0$-semigroups on $B(H)$. $K$, $\alpha$ and $\beta$ are as in Theorem \ref{thm:scedil}, and we emphasize again that they are assumed to be given by the construction in the proof of that theorem. Our results below will be stated with assumptions on $\phi$ and conclusions on $\alpha$, but, of course, these results also hold with $\theta$ and $\beta$ instead of $\phi$ and $\alpha$.

We recall how the dilation of $\phi$ and $\theta$ is constructed.
By the constructions in \cite[Section 3]{MS02}, there are product systems of Hilbert spaces $E = \{E(t)\}_{t\geq 0}$ and $F = \{F(t)\}_{t\geq 0}$ and fully-coisometric product system representations $T^E:E \rightarrow B(H)$ and $T^F:F \rightarrow B(H)$
such that
$$\phi_t (A) = \widetilde{T_t^E} (I \otimes A)\widetilde{T_t^E}^* ,$$
and
$$\theta_t (A) = \widetilde{T_t^F} (I \otimes A)\widetilde{T_t^F}^* ,$$
for all $t \geq 0$ and all $A \in B(H)$. By the constructions in \cite[Section 4]{Shalit07a}, we may
form a product system $X$ over $\Rpt$ and a fully-coisometric representation $T: X \rightarrow B(H)$ by
$$X(s,t) = E(s) \otimes F(t)$$
and
$$T_{(s,t)}(x \otimes y) = T_s^E(x) T_t^F(y) \,\, , \,\, x \in E(s), y \in F(t) .$$
By \cite[Theorem 5.2]{Shalit07a}, there is a Hilbert space $K \supseteq H$ and an isometric and fully-coisometric
representation $V: X \rightarrow B(K)$ such that $V$ is a minimal dilation of $T$.
The dilating E$_0$-semigroups $\alpha$ and $\beta$ are given by
$$\alpha_t(A) = \widetilde{V^E_t}(I \otimes A)\widetilde{V^E_t}^* \,\, , \,\, A \in B(K)$$
and
$$\beta_t(A) = \widetilde{V^F_t}(I \otimes A)\widetilde{V^F_t}^* \,\, , \,\, A \in B(K) ,$$
where $V^E$ is the representation of $E$ given by
$$V^E_t(x) = V_{(t,0)}(x \otimes 1) \,\, , \,\, x \in E(t) ,$$
and $V^F$ is the representation of $F$ given by
$$V^F_t(y) = V_{(0,t)}(1 \otimes y) \,\, , \,\, y \in F(t) .$$

\begin{theorem}\label{thm:aut}
If $\phi$ is a semigroup of automorphisms, then so is $\alpha$.
\end{theorem}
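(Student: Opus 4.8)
The plan is to reduce the statement to two operator-theoretic facts about the data $(E,T^E,V)$: first, that $\widetilde{V^E_t}$ is a unitary operator $E(t)\otimes K\to K$; second, that the fibre $E(t)$ is one-dimensional. Granting both, put $w_t:=\widetilde{V^E_t}$, a unitary on $K$; the defining formula $\alpha_t(A)=\widetilde{V^E_t}(I\otimes A)\widetilde{V^E_t}^*$ then reads $\alpha_t(A)=w_t A w_t^*$, so each $\alpha_t\in\mathrm{Aut}(B(K))$ and $\alpha$ is a semigroup of automorphisms. Note that $\alpha$ is in any case an E$_0$-semigroup, since $V$ isometric makes each $\widetilde{V^E_t}$ an isometry and hence each $\alpha_t$ a normal unital $*$-endomorphism; the real content is surjectivity, and both of the two facts are genuinely needed for it, because $\alpha_t$ is onto iff $\widetilde{V^E_t}(I_{E(t)}\otimes B(K))\widetilde{V^E_t}^*=B(K)$, which for a unitary $\widetilde{V^E_t}$ happens exactly when $\dim E(t)=1$.

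For the first fact I would argue as follows. By definition $V^E_t(x)=V_{(t,0)}(x\otimes 1)$, so under the identification $X(t,0)=E(t)\otimes F(0)\cong E(t)$ — legitimate because $F(0)=\mathbb{C}$ — the operator $\widetilde{V^E_t}\colon E(t)\otimes K\to K$ is literally $\widetilde{V_{(t,0)}}\colon X(t,0)\otimes K\to K$. Now $V$ is an isometric dilation, so $\widetilde{V_{(t,0)}}^*\widetilde{V_{(t,0)}}=I$, and $V$ is fully coisometric (\cite[Theorem~5.2]{Shalit07a}), so $\widetilde{V_{(t,0)}}\widetilde{V_{(t,0)}}^*=I$; hence $\widetilde{V^E_t}$ is unitary. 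This part uses nothing about $\phi$ beyond being a CP$_0$-semigroup.

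For the second fact — the one-dimensionality of $E(t)$ — I would use the hypothesis directly. Since $\phi$ is a semigroup of automorphisms, $\phi_t=\mathrm{Ad}(u_t)$ for a unitary $u_t\in B(H)$, and the minimal Stinespring dilation of $\phi_t$ is $(\mathrm{id}_{B(H)},u_t^*,H)$, whose representation $\mathrm{id}_{B(H)}$ has commutant $\mathbb{C}I$. Since the product system representation $(E,T^E)$ of \cite[Section~3]{MS02} is the minimal one attached to $\phi$, this forces $E(t)\cong\mathbb{C}$, with $T^E_t$ sending a unit vector of $E(t)$ to $u_t$. (Equivalently: $\widetilde{T^E_t}$ is a coisometry because $\phi$ is unital and an isometry because $\phi_t$ is multiplicative, hence unitary, and minimality then forces $\dim E(t)=1$.)

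The step I expect to be the main obstacle is the one-dimensionality of $E(t)$: it is the only place where the hypothesis enters, and it cannot be avoided, since for $\dim E(t)\ge 2$ the very same formula for $\alpha_t$ would produce a proper, non-surjective endomorphism no matter how $V$ is chosen. What must be invoked is genuinely the minimality of the Muhly--Solel construction; merely knowing that $\widetilde{T^E_t}$ is unitary is not enough. The rest is bookkeeping with the identification $X(t,0)=E(t)\otimes F(0)$ and with the isometric and fully-coisometric clauses in the conclusion of \cite[Theorem~5.2]{Shalit07a}.
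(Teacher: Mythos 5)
Your argument is correct and is essentially the paper's own proof with the details filled in: the paper likewise observes that for automorphic $\phi$ the bundle $E$ is the trivial one $\mathbb{R}_+\times\mathbb{C}$ and that an isometric and fully-coisometric representation of it is a semigroup of unitaries, so each $\alpha_t$ is conjugation by a unitary. One small caveat: your parenthetical ``equivalently'' justification of $\dim E(t)=1$ is off, since $\widetilde{T^E_t}$ is unitary for the minimal representation of \emph{any} unital endomorphism (automorphism or not), so unitarity plus minimality alone does not force one-dimensionality; your primary Stinespring-multiplicity argument, however, does the job.
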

\begin{proof}
If $\phi$ is a semigroup of automorphisms, then $E$ turns out to be the trivial bundle $\Rp \times \mathbb{C}$.
In this situation, an isometric and fully-coisometric representation of $E$ is just a semigroup of unitaries. As the formula for $\alpha_t$ shows that it is given by conjugation with a unitary, $\alpha_t$ is an automorphism, for all $t\geq 0$.
\end{proof}

Before proceeding, we write down three (probably well known) facts that we shall need.
\begin{proposition}\label{prop:infdil1}
Let $E$ be a product system of Hilbert spaces over $\Rp$, and let $T$ be a representation of $T$ on $H$.
Let $V$ be the minimal isometric of $T$, representing $E$ on a Hilbert space $G\supseteq H$. If $T$ is not isometric, then $G$ is infinite dimensional.
\end{proposition}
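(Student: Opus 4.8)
The plan is to show that if $T$ is not isometric, then the isometric dilation $V$ acting on $G$ must ``gain'' genuinely new vectors at every scale, and hence $G$ cannot be finite dimensional. The starting point is the defect: since $T$ is not isometric, there is some $t_0 > 0$ and some unit vector $h \in H$ with $\|\widetilde{T_{t_0}}^*(h)\| < \|h\|$, equivalently $I_H - \widetilde{T_{t_0}}\widetilde{T_{t_0}}^* \neq 0$. Because $V$ is isometric, $\widetilde{V_{t}}^*\widetilde{V_t} = I_{E(t)\otimes G}$ for all $t$, so on $G$ the map $V$ has trivial defect; the point is to compare dimensions via the minimality relation $G = \bigvee_{t\geq 0} V_t(E(t))H$.

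The key step is a counting/semigroup argument. First I would observe that the defect of $T$ propagates: using the product system structure $E(s)\otimes E(t)\cong E(s+t)$ and the semigroup property of the representation, one shows that $I_H - \widetilde{T_s}\widetilde{T_s}^*$ is nonzero for all $s$ in an interval, in fact for all sufficiently small $s>0$ (if it vanished on $[0,s_0]$ it would vanish for all $t$ by the semigroup identity, contradicting non-isometry). Then, for the dilation $V$ on $G$, the subspace $V_{t_0}(E(t_0))H \subseteq G$ is a proper subspace whose orthogonal complement inside $\bigvee_{s} V_s(E(s))H$ is nontrivial — this is exactly the content of $H \not\subseteq V_{t_0}(E(t_0))H$, which follows from the dilation identity $P_H V_{t_0}(x) = T_{t_0}(x)P_H$ together with $\widetilde{T_{t_0}}$ not being coisometric. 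Now iterate: the spaces $V_{nt_0}(E(nt_0))H$ form a strictly decreasing chain (strict because each successive quotient sees a fresh copy of the defect space, by the product system factorization $E(nt_0)\cong E(t_0)^{\otimes n}$ and isometry of $V$), so $G \supseteq \bigcup_n (V_{nt_0}(E(nt_0))H)^\perp$ has infinitely many mutually orthogonal nonzero pieces, forcing $\dim G = \infty$.

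The main obstacle I anticipate is making the ``strictly decreasing chain'' claim precise — specifically, ruling out that the defect spaces at successive stages overlap or collapse once one passes into the larger space $G$. The clean way around this is to work not with the nested subspaces directly but with the Wold-type decomposition of the isometric representation $V$: an isometric representation of a product system over $\Rp$ decomposes (up to the fully-coisometric part) into a ``shift'' part built on a multiplicity space, and the multiplicity space is nonzero precisely when $V$ is not fully coisometric on $H$; since $V$ restricted to the $H$-generated part is a dilation of the non-isometric $T$, the cumulative defect over all of $[0,t_0]$ injects into $G$, and the continuous-index shift structure immediately gives infinite dimension. Alternatively, and perhaps more elementarily, one can invoke that the minimal isometric dilation is unitarily equivalent to the one built via the inductive limit / path-space construction of \cite{MS02}, where $G$ is visibly an infinite direct sum (an $L^2$-type space over $\Rp$ with values in the defect correspondence) as soon as the defect is nonzero. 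I would present the Wold/inductive-limit version as the spine of the argument and relegate the defect-propagation lemma to a one-line semigroup computation.
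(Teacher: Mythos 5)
There is a genuine gap, and it sits right at your starting point: you conflate the isometric defect with the coisometric one. That $T$ is not isometric means $I_{E(t_0)\otimes H} - \widetilde{T_{t_0}}^*\widetilde{T_{t_0}} \neq 0$ for some $t_0$; it does \emph{not} give a unit vector $h \in H$ with $\|\widetilde{T_{t_0}}^*h\| < \|h\|$, i.e.\ it does not give $I_H - \widetilde{T_{t_0}}\widetilde{T_{t_0}}^* \neq 0$. These are independent conditions, and in the very situation this proposition is used in the paper (Theorem \ref{thm:cocycle}) the representation $T = T^E$ is \emph{fully coisometric}, because $\phi$ is unital, while failing to be isometric because $\phi$ is not an endomorphism semigroup. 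So the defect operator your whole argument is built on vanishes exactly in the case of interest, and with it collapse the subsequent steps: the claim $H \not\subseteq V_{t_0}(E(t_0))H$ (which you derive from non-coisometry of $\widetilde{T_{t_0}}$), the ``strictly decreasing chain,'' and also your proposed Wold-decomposition repair. Indeed, the minimal isometric dilation of a fully coisometric representation is itself fully coisometric (this is \cite[Theorem 3.7]{MS02}, used in Theorem \ref{thm:restrict}), so its ``shift part''/multiplicity space is zero; nevertheless the dilation space must be infinite dimensional, which shows the Wold route cannot detect the phenomenon. Your second fallback (the explicit construction in \cite{MS02}) points in the right direction, but only if ``defect'' there means the isometric defect $I - \widetilde{T}^*\widetilde{T}$, and the continuous-time dilation space in \cite{MS02} is not simply an $L^2$-space over $\Rp$ with values in a defect correspondence, so as written this is an appeal to a structure that is not actually there.

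For comparison, the paper's proof avoids the continuous-parameter structure entirely: fix one $t_0$ with $\widetilde{T_{t_0}}$ not isometric, note that any isometric dilation of the product system representation contains (after restriction) the minimal isometric dilation of the \emph{single} completely contractive covariant representation $T_{t_0}$ of the Hilbert space $E(t_0)$, and then read off from the proof of \cite[Theorem 2.18]{MS02} that this minimal dilation acts on a space containing infinitely many mutually orthogonal nonzero summands of the form $E(t_0)^{\otimes n} \otimes \mathcal{D}$, where $\mathcal{D}$ is the closure of the range of $I - \widetilde{T_{t_0}}^*\widetilde{T_{t_0}}$, which is nonzero by hypothesis. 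If you rework your argument around that defect (a Sch\"affer-type picture for a single step, rather than propagation over the continuous parameter), it becomes essentially the paper's proof.
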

\begin{proof}
Any dilation of the product system representation $T$ contains the minimal dilation of the single c.c. representation $T_t$ of the correspondence $E(t)$, for all $t$. Thus it is enough to show that the minimal isometric dilation of a
single completely contractive covariant representation that is not isometric represents the correspondence on an
infinite dimensional space. This can be dug out of the proof of \cite[Theorem 2.18]{MS02}.
\end{proof}

\begin{proposition}\label{prop:infdil2}
Assume that $\phi$'s minimal E$_0$-dilation
acts on $B(G)$, where $G\supseteq H$ is a Hilbert space. If $\phi$ is not an E$_0$-semigroup itself, then
$G$ is infinite dimensional.
\end{proposition}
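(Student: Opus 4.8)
The plan is to identify the Hilbert space $G$ with the one appearing in Proposition \ref{prop:infdil1} and then apply that proposition. First I recall (by specializing the construction reviewed at the beginning of this section to the case in which the second semigroup is trivial, or see \cite{MS02}) that the minimal E$_0$-dilation of $\phi$ may be realized as follows: one takes the product system of Hilbert spaces $E = \{E(t)\}_{t\geq 0}$ and the fully-coisometric representation $T^E : E \rightarrow B(H)$ with $\phi_t(A) = \widetilde{T^E_t}(I \otimes A)\widetilde{T^E_t}^*$, one lets $V$ be the minimal isometric dilation of $T^E$ on a Hilbert space $G \supseteq H$, and one sets $\alpha_t(A) = \widetilde{V_t}(I \otimes A)\widetilde{V_t}^*$ for $A \in B(G)$. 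Since the minimal E$_0$-dilation is unique by Bhat's Theorem, this $G$ agrees, up to unitary equivalence fixing $H$, with the space in the statement, so it suffices to show $\dim G = \infty$.

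Next I would argue that $T^E$ is not an isometric representation. This is the contrapositive of the easy half of the dictionary between representations and $*$-endomorphisms: if $\widetilde{T^E_t}$ were isometric for every $t$, then $\widetilde{T^E_t}^* \widetilde{T^E_t} = I$ would give $\phi_t(AB) = \widetilde{T^E_t}(I\otimes A)\widetilde{T^E_t}^*\widetilde{T^E_t}(I\otimes B)\widetilde{T^E_t}^* = \phi_t(A)\phi_t(B)$ for all $A,B \in B(H)$, so every $\phi_t$ would be a (unital, normal) $*$-endomorphism and $\phi$ would be an E$_0$-semigroup, contrary to hypothesis. Hence some $T^E_{t_0}$ fails to be isometric, i.e. $T^E$ is not isometric as a product system representation.

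Finally I would invoke Proposition \ref{prop:infdil1} applied to the product system $E$ and the non-isometric representation $T^E$: its minimal isometric dilation $V$ necessarily represents $E$ on an infinite-dimensional Hilbert space, that is, $\dim G = \infty$. Combined with the first paragraph, this proves the proposition.

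The only steps I expect to require care are the bookkeeping in the first paragraph --- that the minimal isometric dilation of $T^E$ genuinely produces the minimal E$_0$-dilation of $\phi$ on that same space $G$ (this is part of the Muhly--Solel picture and should be cited rather than reproved), together with the appeal to Bhat's Theorem that makes ``the'' dimension of $G$ unambiguous. Everything else is immediate: the second paragraph is a one-line computation, and Proposition \ref{prop:infdil1} supplies all the substantive input.
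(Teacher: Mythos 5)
Your proposal is correct and follows essentially the same route as the paper: identify the space of the minimal E$_0$-dilation with the space of the minimal isometric dilation of $T^E$ via the Muhly--Solel construction and uniqueness, and then apply Proposition \ref{prop:infdil1}. You merely spell out the (correct) contrapositive step that $\phi$ failing to be an E$_0$-semigroup forces $T^E$ to be non-isometric, which the paper leaves implicit.
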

\begin{proof}
This follows from the previous proposition and from the uniqueness of the minimal E$_0$-dilation, together
with Muhly and Solel's construction of the minimal E$_0$-dilation in terms of product system representations and
isometric dilations.
\end{proof}
\begin{proposition}\label{prop:Arv}
Let $\gamma$ be an E$_0$-semigroup acting on a separable Hilbert space $G$. Let $P$ be an infinite dimensional
projection in $B(G)$ such that $\gamma_t(P) = P$ for all $t\geq 0$. Let $\sigma$ denote the restriction of $\gamma$ to the invariant corner $PB(G)P = B(PG)$. Then $\sigma$ and $\gamma$ are cocycle conjugate.
\end{proposition}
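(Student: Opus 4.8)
The plan is to realize $\sigma$ and $\gamma$ as the ``same'' E$_0$-semigroup up to a conjugacy that is implemented by a unitary from $G$ to $PG$, twisted by a cocycle. Concretely, I would invoke the structure theory for E$_0$-semigroups with an invariant projection: when $P$ is a $\gamma$-invariant projection, the family $\{\gamma_t\}$ restricts to an E$_0$-semigroup $\sigma$ on $B(PG)$, and one wants to compare $\gamma$ with $\sigma$ amplified back up to $B(G)$. Since $P$ is infinite-dimensional and $G$ is separable, $PG$ and $G$ are unitarily isomorphic as Hilbert spaces; fix a unitary $u: G \to PG$. The candidate for the cocycle conjugacy is the map $\text{Ad}\, u$ conjugating $\gamma$ to an E$_0$-semigroup on $B(PG)$, which we then want to show is cocycle conjugate to $\sigma$.

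The key steps, in order, would be: (i) observe that because $\gamma_t(P) = P$, the compression $\sigma_t = \gamma_t(\cdot)\big|_{PG}$ is a genuine E$_0$-semigroup on $B(PG)$, and moreover $\gamma_t(A) \geq \gamma_t(PAP) = \sigma_t(PAP)$ sits inside $\gamma_t(1) = 1$; (ii) compare the ``size'' of $\gamma$ and $\sigma$ by using that $\gamma_t(1-P)$ is an increasing (in fact constant, equal to $1-P$) projection, so $\gamma$ splits as a direct sum of $\sigma$ on $PG$ and the restriction of $\gamma$ to $(1-P)G$; (iii) appeal to the fact (this is exactly the content attributed to Arveson, e.g.\ \cite[Proposition 2.4.1 / Corollary]{Arv03}, on adding a trivial summand) that tensoring an E$_0$-semigroup with the identity E$_0$-semigroup on $B(\ell^2)$, or equivalently adding any summand on a space the semigroup acts trivially on, produces a cocycle conjugate E$_0$-semigroup — provided the new total space is still separable and the ``ampliation'' is by an infinite multiplicity. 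Here the hypothesis that $P$ is infinite-dimensional guarantees that $\sigma$ on $B(PG)$ already ``contains'' infinite multiplicity, so gluing on the extra piece $(1-P)G$ does not change the cocycle conjugacy class. Stringing (i)–(iii) together gives $\gamma \cong \sigma \oplus (\gamma\big|_{(1-P)G}) \sim_{\text{coc}} \sigma$.

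The main obstacle I anticipate is step (iii): making precise the sense in which ``$\gamma$ is $\sigma$ plus a trivial-action summand implies cocycle conjugacy.'' The clean version of this is Arveson's result that for an E$_0$-semigroup $\sigma$ on $B(\mathcal{H})$ and the trivial E$_0$-semigroup on a separable infinite-dimensional space, $\sigma \otimes \text{id}$ is cocycle conjugate to $\sigma$; one has to match the present situation (a direct sum $\sigma \oplus \rho$ where $\rho$ acts on a possibly smaller space, with no control on $\rho$ beyond that it is an E$_0$-semigroup — actually here $\gamma\big|_{(1-P)G}$ need not even be unital if $(1-P)G$ is not preserved as a whole, but $\gamma_t(1-P) = 1-\gamma_t(P) = 1-P$ shows it is) to that statement. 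The trick is that since $\dim PG = \infty = \dim G$, we may write $G \cong PG \otimes \ell^2$ in a way compatible enough with $\gamma$, or alternatively absorb the summand by the standard ``doubling'' cocycle argument; the bookkeeping to see that the resulting conjugacy is genuinely a cocycle conjugacy (i.e.\ the intertwiner is a local $\sigma$-cocycle of unitaries, not merely a conjugacy of semigroups) is where care is needed. I would cite \cite{Arv03} for the precise form of this absorption principle and spend the bulk of the write-up verifying its hypotheses in the present setting rather than reproving it.
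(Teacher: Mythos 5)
The paper's own proof is a one-line citation of \cite[Proposition 2.2.3]{Arv03}, so the real question is whether your sketch would stand as a proof of that fact; it would not, because step (ii) is wrong. $B(G)$ is not $B(PG)\oplus B((1-P)G)$: it also contains the off-diagonal corners $PB(G)(1-P)$ and $(1-P)B(G)P$, and $\gamma_t$ acts there in a way that is not determined by the two diagonal restrictions. A ``direct sum'' of the corner semigroups is a semigroup of endomorphisms of the von Neumann algebra $B(PG)\oplus B((1-P)G)$, not of $B(G)$, so the identification $\gamma\cong\sigma\oplus\gamma\big|_{(1-P)G}$ on which your step (iii) feeds is simply not available. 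In addition, the absorption principle you want to invoke (that $\sigma\otimes\mathrm{id}$ on $B(PG\otimes\ell^2)$ is cocycle conjugate to $\sigma$) is, in Arveson's development, essentially a special case of the very proposition being proved: apply the proposition to the fixed infinite-rank projection $1\otimes e_{11}$. So the route is circular as well as gapped, and you correctly sensed that step (iii) is the sore point --- but the cure is not better bookkeeping of a direct-sum picture, because that picture is not there.

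The missing idea is the explicit unitary cocycle, and it makes the proof two lines. Since $P$ has infinite rank and $G$ is separable, $\dim PG=\dim G=\aleph_0$, so choose $u\in B(G)$ with $u^*u=1_G$ and $uu^*=P$ (an isometry of $G$ onto $PG$). Set $U_t:=u^*\gamma_t(u)$. Using $\gamma_t(P)=P$ one checks that each $U_t$ is unitary ($U_t^*U_t=\gamma_t(u^*Pu)=\gamma_t(1)=1$ and $U_tU_t^*=u^*\gamma_t(uu^*)u=u^*Pu=1$), that $U_sγ_s(U_t)=u^*\gamma_s(uu^*\gamma_t(u))=u^*\gamma_{s+t}(u)=U_{s+t}$, and that $t\mapsto U_t$ inherits the required continuity; thus $U$ is a $\gamma$-cocycle. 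The perturbed semigroup satisfies $U_t\gamma_t(A)U_t^*=u^*\gamma_t(uAu^*)u$, which is exactly $\mathrm{Ad}(u^*)\circ\sigma_t\circ\mathrm{Ad}(u)$, i.e.\ it is conjugate to $\sigma$. This is where the hypotheses $\gamma_t(P)=P$ and $\mathrm{rank}\,P=\infty$ are actually used, and no decomposition or absorption argument is needed.
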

\begin{proof}
\cite[Proposition 2.2.3]{Arv03}.
\end{proof}

\begin{theorem}\label{thm:cocycle}
If $\phi$ is not a semigroup of automorphisms, then $\alpha$ is cocycle conjugate to $\phi$'s minimal
dilation.
\end{theorem}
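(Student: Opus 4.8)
The plan is to show that the minimal $*$-endomorphic dilation $\alpha^{\min}$ of $\phi$ sits inside $\alpha$ as the restriction to an $\alpha$-invariant corner whose projection is infinite dimensional, and then invoke Proposition \ref{prop:Arv}. To produce such a corner, I would look at the Hilbert space $G = \bigvee_{t\geq 0} V^E_t(E(t))H$ and let $P = P_G$ be the orthogonal projection of $K$ onto $G$. The first step is to verify that $V^E$ restricted to $G$ is precisely the \emph{restriction of $V^E$ to a minimal isometric dilation of $T^E$} in the sense of Section \ref{sec:restricting}; by Theorem \ref{thm:restrict} this restriction is again isometric and fully-coisometric, and by uniqueness of the minimal isometric dilation (and Muhly--Solel's construction of the minimal E$_0$-dilation, as in Proposition \ref{prop:infdil2}) the E$_0$-semigroup it induces on $B(G)$, namely $\sigma_t(A) = \widetilde{(V^E_t|_G)}(I\otimes A)\widetilde{(V^E_t|_G)}^*$, is exactly $\alpha^{\min}$.

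The second step is to check that $G$ is $\alpha$-invariant, i.e. $\alpha_t(P) = P$ for all $t$, equivalently that $\alpha_t(A) = \sigma_t(P A P)$ agrees with $\alpha_t$ on $B(PG)$ and that $P$ is a fixed point. This should follow from the formula $\widetilde{V^E_t}(I\otimes \xi) = V^E_t(\cdot)\xi$ together with the fact that $G$ is by construction the closed span of the ranges $V^E_t(E(t))H$: applying $\widetilde{V^E_t}$ to $E(t)\otimes G$ lands back in $G$, while the fully-coisometric property $\widetilde{V^E_t}\widetilde{V^E_t}^* = I$ forces the range projection of $\widetilde{V^E_t}|_{E(t)\otimes G}$ to be exactly $P_G$; unwinding the definition of $\alpha_t$ then gives $\alpha_t(P_G) = \widetilde{V^E_t}(I\otimes P_G)\widetilde{V^E_t}^* = P_G$. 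The third step is the dimension count: since $\phi$ is not an automorphism semigroup, $\phi$ is \emph{a fortiori} not an E$_0$-semigroup (an E$_0$-semigroup on $B(H)$ that is a dilation of itself but fails to be automorphic cannot occur in the relevant minimal sense — more precisely, if $\phi$ were a non-automorphic E$_0$-semigroup then $T^E$ would already be isometric but not unitary, and Proposition \ref{prop:infdil1} still applies), so by Proposition \ref{prop:infdil1} (or directly Proposition \ref{prop:infdil2}) $G$ is infinite dimensional, and $P = P_G$ is an infinite dimensional projection.

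Having assembled these pieces, I would finish by applying Proposition \ref{prop:Arv} to $\gamma = \alpha$, the projection $P = P_G$, and $\sigma = \alpha^{\min}$: it yields that $\alpha$ and its restriction to the corner $PB(K)P$ are cocycle conjugate, and since that restriction is $\alpha^{\min}$, we conclude $\alpha$ is cocycle conjugate to $\phi$'s minimal dilation. The step I expect to be the main obstacle is the second one — showing $\alpha_t(P_G) = P_G$, i.e. that the corner cut out by $G$ is genuinely $\alpha$-invariant; this requires being careful that the \emph{same} Hilbert space $G$ that carries the minimal isometric dilation of $T^E$ is also invariant under the operators $\widetilde{V^E_t}$ defining $\alpha$, and one needs the fully-coisometric hypothesis (hence Theorem \ref{thm:restrict}, not just Theorem \ref{thm:restrict0}) to get the range projection right rather than merely an inclusion. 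A secondary subtlety is making sure the minimality of $V$ as a dilation of the \emph{two-parameter} representation $T$ is compatible with, and does not interfere with, the one-parameter minimality needed to identify $\sigma$ with $\alpha^{\min}$.
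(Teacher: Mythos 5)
Your proposal is correct and follows essentially the same route as the paper: your space $G$ is precisely the space $L$ of Section \ref{sec:restricting}, and the paper likewise shows $\alpha_t(P_L)=P_L$ using the fully-coisometric restricted representation $W$ of Theorem \ref{thm:restrict}, gets $\dim L=\infty$ from Proposition \ref{prop:infdil1}, applies Proposition \ref{prop:Arv}, and identifies the corner semigroup with the minimal E$_0$-dilation of $\phi$ via the Muhly--Solel results. One caveat: your parenthetical claim that ``$\phi$ is a fortiori not an E$_0$-semigroup'' is false in general, and Proposition \ref{prop:infdil1} does not apply when $T^E$ is isometric; the dimension count nevertheless survives in that case because a non-automorphic E$_0$-semigroup on $B(H)$ can only exist when $H$ is infinite dimensional, so $G\supseteq H$ is still infinite dimensional.
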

\begin{proof}
As in Section \ref{sec:restricting}, let $W$ denote the restriction of $V^E$ to the minimal isometric (and fully-coisometric) dilation of $T^E$, and denote by $L$ the space
on which it represents $E$. By Proposition \ref{prop:infdil1}, ${\rm dim} L = \infty$.
We compute:
\begin{align*}
\alpha_t(P_L) &= \widetilde{V^E_t}(I \otimes P_L)\widetilde{V^E_t}^* \\
&= \widetilde{W}_t(I \otimes P_L)\widetilde{W}_t^* P_L = P_L.
\end{align*}
Let $\sigma$ denote the restriction of $\alpha$ to $B(P_L K)$. By Proposition \ref{prop:Arv}, $\alpha$ and $\sigma$ are cocycle conjugate. It remains to show that $\sigma$ is the minimal dilation of $\phi$. But for all $A \in B(L), t\geq 0$,
\begin{align*}
\sigma_t(A) &= \sigma_t(P_L A P_L) \\
&= \alpha_t(P_L A P_L) \\
&= \widetilde{V^E_t}(I \otimes P_L A P_L)\widetilde{V^E_t}^* \\
&= \widetilde{W}_t(I \otimes A)\widetilde{W}_t^* .
\end{align*}
But $W$ is $T^E$'s minimal dilation. The results of \cite{MS02} show that $\sigma$ must therefore be the minimal E$_0$-dilation of $\phi$.
\end{proof}

\begin{corollary}
$\alpha$ is cocycle conjugate to the minimal dilation of $\phi$ in all cases except the case where
$\phi$ is an automorphism semigroup, $\theta$ is not an automorphism semigroup and $H$ is finite dimensional.
\end{corollary}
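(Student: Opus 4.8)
The plan is to read off the final corollary from the two structural theorems proved just above, namely Theorem \ref{thm:aut} and Theorem \ref{thm:cocycle}, by a case analysis driven by whether $\phi$ and $\theta$ are automorphism semigroups, and, in the borderline case, by whether $H$ is finite or infinite dimensional. The only nontrivial content is the claim that in all cases \emph{except} the single exceptional one, $\alpha$ is cocycle conjugate to the minimal E$_0$-dilation of $\phi$; the exceptional case is where $\phi$ is an automorphism semigroup, $\theta$ is not, and $\dim H < \infty$. So I would split into the case $\phi$ is not an automorphism semigroup and the case $\phi$ is an automorphism semigroup.

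First, if $\phi$ is not a semigroup of automorphisms, Theorem \ref{thm:cocycle} directly gives that $\alpha$ is cocycle conjugate to $\phi$'s minimal dilation, and there is nothing further to check — this disposes of every case in which $\phi$ fails to be an automorphism semigroup, regardless of $\theta$ and $\dim H$. Second, suppose $\phi$ is a semigroup of automorphisms. By Theorem \ref{thm:aut}, $\alpha$ is then also an automorphism semigroup. I then need the standard fact that an automorphism semigroup is its own minimal E$_0$-dilation (the dilation space is $H$ itself, since conjugation by unitaries already gives endomorphisms, and minimality is trivial). So the question becomes whether $\alpha$, an automorphism semigroup on $B(K)$, is cocycle conjugate to $\phi$, an automorphism semigroup on $B(H)$. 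Here I would invoke the parenthetical assertion already made in the introduction: $\alpha$ is cocycle conjugate to $\phi$ if and only if $H$ is infinite dimensional. (If $\dim H = \infty$, one shows $K$ and $H$ are both infinite-dimensional separable and uses that $\alpha$ and $\phi$ agree on the corner $P_H B(K) P_H = B(H)$ together with an application of Proposition \ref{prop:Arv} after checking $\alpha_t(P_H) = P_H$, which holds because $E$ is the trivial bundle and $V^E$ restricts to a unitary semigroup on $H$; if $\dim H < \infty$ then $\alpha$ acts on $B(K)$ with $\dim K$ possibly infinite — indeed infinite as soon as $\theta$ is a non-automorphism by Proposition \ref{prop:infdil2} applied to the $\beta$ side — and a finite-dimensional automorphism semigroup cannot be cocycle conjugate to one acting on an infinite-dimensional space.)

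Assembling the cases: $\alpha$ is cocycle conjugate to the minimal dilation of $\phi$ precisely when either (i) $\phi$ is not an automorphism semigroup, or (ii) $\phi$ is an automorphism semigroup and $H$ is infinite dimensional. The complement of this is: $\phi$ is an automorphism semigroup and $H$ is finite dimensional. But if $\phi$ is an automorphism semigroup, $H$ is finite dimensional, and \emph{also} $\theta$ is an automorphism semigroup, then by symmetry $\beta$ is an automorphism semigroup too and one checks $K = H$, so $\alpha = \phi$ is trivially cocycle conjugate to its minimal dilation. Hence the genuine failure occurs only when $\phi$ is an automorphism semigroup, $\theta$ is not, and $H$ is finite dimensional — exactly the stated exceptional case. (In that exceptional case $\dim K = \infty$ by Proposition \ref{prop:infdil2}, so $\alpha$ genuinely is not cocycle conjugate to $\phi$.)

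The main obstacle, such as it is, is not in the logical bookkeeping but in nailing down the infinite-dimensional automorphism case of step two: one must verify that two automorphism semigroups acting on $B(H)$ and $B(K)$ respectively, with $H \subseteq K$, $\dim H = \dim K = \aleph_0$, agreeing on the corner $B(H)$, and with $P_H$ an $\alpha$-invariant (infinite-dimensional) projection, are cocycle conjugate — and this is precisely where Proposition \ref{prop:Arv} applies, after recording $\alpha_t(P_H) = P_H$. The finite-dimensional sub-case is easy once one observes that cocycle conjugacy forces the two underlying Hilbert spaces to have the same dimension, so a careful statement of that dimension invariance is the only thing that needs to be said. Everything else is immediate from the theorems already established.
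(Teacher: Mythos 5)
Your case analysis coincides with the paper's: the non-automorphism case is Theorem \ref{thm:cocycle} verbatim; in the automorphism case you use Theorem \ref{thm:aut}, note that $\phi$ is its own minimal dilation, split on $\dim H$, and in the finite-dimensional case split on $\theta$, getting $\alpha=\phi$ when $\theta$ is an automorphism semigroup and non-conjugacy otherwise because $K$ is then infinite dimensional while $H$ is not --- exactly the paper's argument. The one place you genuinely diverge is the case where $\phi$ is an automorphism semigroup and $\dim H=\infty$: the paper disposes of it by citing Arveson's Remark 2.2.4, i.e.\ the fact that any two automorphism E$_0$-semigroups acting on infinite-dimensional separable Hilbert spaces are cocycle conjugate (both being cocycle perturbations of the trivial semigroup), which requires no information whatsoever about how $\alpha$ sits over $\phi$. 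You instead rerun the mechanism of Theorem \ref{thm:cocycle}: verify $\alpha_t(P_H)=P_H$, identify the restriction of $\alpha$ to the corner $B(H)$ with $\phi$, and apply Proposition \ref{prop:Arv}. This is a valid alternative, but note that the step you assert as ``$E$ is the trivial bundle and $V^E$ restricts to a unitary semigroup on $H$'' is precisely what needs an argument: co-invariance of $H$ comes with the dilation, and invariance follows because $T^E$ is isometric in the automorphism case (equivalently, in the notation of Section \ref{sec:restricting}, the space $L$ of the restricted minimal isometric dilation equals $H$ by uniqueness of minimal isometric dilations, after which the computation in the proof of Theorem \ref{thm:cocycle} yields $\alpha_t(P_H)=P_H$). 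So your route costs this small extra verification that the paper's citation avoids, in exchange for being self-contained modulo Proposition \ref{prop:Arv} and for exhibiting $\phi$ concretely as the invariant corner implementing the cocycle conjugacy; everything else in your proposal matches the paper's proof.
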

\begin{proof}
Assume that $\phi$ is a semigroup of automorphisms. In this case it is, of course, its own minimal dilation. We know by Theorem \ref{thm:aut} that $\alpha$ is also a semigroups of automorphisms. If $H$ is infinite dimensional, then $\alpha$ and $\phi$ are cocycle conjugate (this is the content of Remark 2.2.4, \cite{Arv03}).

Assume further that $H$ is finite dimensional. If $\theta$ is also an automorphism semigroup, then $\alpha = \phi$ (and $\beta = \theta$). Finally, if $\theta$ is not a semigroup of automorphisms, then, by Proposition \ref{prop:infdil2}, $K$ must be infinite dimensional, so $\alpha$ cannot be cocycle conjugate to $\phi$.
\end{proof}

\begin{corollary}
Assume that $\phi$ is not an automorphism semigroup and has a bounded generator. Then $\alpha$ is a type I E$_0$-semigroup.
\end{corollary}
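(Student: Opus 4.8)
The plan is to combine Theorem \ref{thm:cocycle} with Arveson's Theorem \ref{thm:Arv}. The key observation is that cocycle conjugacy preserves the type of an E$_0$-semigroup; this is a standard fact from \cite{Arv03}. Since $\phi$ is assumed not to be an automorphism semigroup, Theorem \ref{thm:cocycle} tells us that $\alpha$ is cocycle conjugate to the minimal E$_0$-dilation of $\phi$, which we denote $\alpha^{\min}$. It therefore suffices to show that $\alpha^{\min}$ is of type I, and then conclude that $\alpha$, being cocycle conjugate to it, is of type I as well.

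The second ingredient is that $\alpha^{\min}$ is of type I whenever $\phi$ has a bounded generator: this is precisely Arveson's Theorem \ref{thm:Arv}. So the proof is essentially a two-line deduction: first invoke Theorem \ref{thm:cocycle} to get $\alpha \sim \alpha^{\min}$ (cocycle conjugacy), then invoke Theorem \ref{thm:Arv} to get that $\alpha^{\min}$ has type I, and finally use the invariance of type under cocycle conjugacy. The parenthetical remark that this covers the case of finite-dimensional $H$ follows because a CP$_0$-semigroup on $B(H)$ with $\dim H < \infty$ automatically has a bounded generator (the generator is a bounded operator on the finite-dimensional space $B(H) \cong M_n(\mathbb{C})$), and a nontrivial such semigroup is not an automorphism semigroup in the relevant cases.

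I do not anticipate a genuine obstacle here, since all the heavy lifting has been done in Theorem \ref{thm:cocycle} and in the cited results of Arveson and Muhly--Solel. The only point that requires a moment's care is the invariance of type under cocycle conjugacy, but this is built into Arveson's definition of the types (types are defined via the product system of the E$_0$-semigroup, and cocycle conjugate E$_0$-semigroups have isomorphic product systems), so it is legitimate to quote it. One should also make sure the hypotheses of Theorem \ref{thm:cocycle} are met — namely that $\phi$ is not an automorphism semigroup — which is exactly what is assumed in the corollary statement. Thus the proof reduces to: $\alpha$ is cocycle conjugate to $\phi$'s minimal dilation by Theorem \ref{thm:cocycle}; that minimal dilation is of type I by Theorem \ref{thm:Arv}; hence $\alpha$ is of type I.
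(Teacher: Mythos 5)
Your proof is correct and is exactly the paper's argument: the paper also deduces the corollary by combining Theorem \ref{thm:cocycle} with Theorem \ref{thm:Arv}, with the invariance of type under cocycle conjugacy used implicitly. Your spelled-out version adds nothing essentially different, only the (legitimate) justification of that invariance via product systems.
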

\begin{proof}
This follows from Theorems \ref{thm:Arv} and \ref{thm:cocycle}.
\end{proof}

\begin{remark}
\emph{By the results in \cite{Arv99}, one may also effectively compute the index of $\alpha$ in terms of natural structures associated with the generator of $\phi$.}
\end{remark}

\section{Acknowledgement}
This research is part of the authors PhD. thesis, done under
the supervision of Baruch Solel.

%%%%%%%%%%%%%%%%%%%%%%%%%%%%%%%%%%%%%%%%%%%%%%%%%%%%%%%%%%%%%%%%%%%%%%%%%%%%
% The bibliography

\end{document}